\newtheorem{theorem}{Theorem}[section]
\newtheorem{lemma}[theorem]{Lemma}
\newtheorem{corollary}[theorem]{Corollary}
\theoremstyle{definition}
\theoremstyle{remark}
\newtheorem*{remark}{Remark}
\DeclareMathOperator{\diam}{diam}
\begin{document}
\title[Asymptotically nonexpansive mappings]{The super fixed point property
for asymptotically nonexpansive mappings}
\author[]{Andrzej Wi\'{s}nicki}
\subjclass[2010]{Primary 47H09, 47H10; Secondary 46B08}
\address{Andrzej Wi\'{s}nicki, Institute of Mathematics, Maria Curie-Sk{\l }%
odowska University, 20-031 Lublin, Poland}
\email{awisnic@hektor.umcs.lublin.pl}
\keywords{fixed point property, nonexpansive mapping, asymptotically
nonexpansive mapping, ultrapower, common fixed point}

\begin{abstract}
We show that the super fixed point property for nonexpansive mappings and
for asymptotically nonexpansive mappings in the intermediate sense are
equivalent. As a consequence, we obtain fixed point theorems for
asymptotically nonexpansive mappings in uniformly nonsquare and uniformly
noncreasy Banach spaces. The results are generalized for commuting families
of asymptotically nonexpansive mappings.
\end{abstract}

\maketitle

\section{Introduction}

The classical problem in metric fixed point theory, a branch of fixed point
theory which emerged from the Banach contraction principle, is concerned
with the existence of fixed points of nonexpansive mappings. Recall that a
mapping $T:C\rightarrow C$ is nonexpansive if
\begin{equation*}
\Vert Tx-Ty\Vert \leq \Vert x-y\Vert
\end{equation*}%
for all $x,y\in C$. A Banach space $X$ is said to have the fixed point
property (FPP for short) if every nonexpansive self-mapping defined on a
nonempty bounded closed and convex set $C\subset X$ has a fixed point (see
\cite{AyDoLo, GoKi, KiSi}). One of the natural and extensively studied
generalizations of nonexpansive mappings was introduced by Goebel and Kirk
\cite{GoKi1}. A mapping $T:C\rightarrow C$ is said to be asymptotically
nonexpansive if there exists a sequence of real numbers $(k_{n})$ with $%
\lim_{n}k_{n}=1$ such that%
\begin{equation*}
\Vert T^{n}x-T^{n}y\Vert \leq k_{n}\Vert x-y\Vert
\end{equation*}%
for all $x,y\in C$ and $n\in \mathbb{N}$.

Let $B$ be the closed unit ball in $\ell _{2}$ and set
\begin{equation*}
T(x_{1},x_{2},x_{3},...)=(0,x_{1}^{2},a_{2}x_{2},a_{3}x_{3},...),
\end{equation*}%
where $(x_{1},x_{2},x_{3},...)\in B$ and $(a_{n})$ is a sequence of reals in
$(0,1)$ such that $\prod_{n=2}^{\infty }a_{n}=\frac{1}{2}.$ Then
\begin{equation*}
\Vert Tx-Ty\Vert \leq 2\Vert x-y\Vert
\end{equation*}%
and
\begin{equation*}
\Vert T^{n}x-T^{n}y\Vert \leq 2\prod_{i=2}^{n}a_{i}\Vert x-y\Vert
\end{equation*}
(see \cite{GoKi1}). This shows that the class of asymptotically nonexpansive
mappings is wider than the class of nonexpansive mappings.

In spite of the common belief that asymptotically nonexpansive mappings
share a lot of properties of nonexpansive mappings, there exist relatively
few results concerning the existence of fixed points for such mappings. The
original result from \cite{GoKi1} stating that asymptotically nonexpansive
mappings have the fixed point property in uniformly convex spaces was
generalized in \cite{Xu} when $X$ is nearly uniformly convex, in \cite%
{LiTaXu} when $X$ satisfies the uniform Opial condition and in \cite{KiXu}
when $X$ has uniform normal structure. It is still unknown whether normal
structure implies the fixed point property for asymptotically nonexpansive
mappings acting on a convex and weakly compact subset of a Banach space $X.$
Until now, the situation has been even worse in Banach spaces without normal
structure.

In 1998, Kirk, Martinez Ya\~{n}ez and Shin \cite{KiMaSh} showed that if $X$
has the super fixed point property for nonexpansive mappings (i.e., every
Banach space finitely representable in $X$ has FPP), then every
asymptotically nonexpansive mapping defined on a bounded closed and convex
subset of $X$ has approximate fixed points, i.e., there exists a sequence $%
(x_{n})$ such that $\lim_{n}\Vert Tx_{n}-x_{n}\Vert =0$. In the present
paper we strengthen this result by showing, in Theorem \ref{Th2}, that the
super fixed point property for nonexpansive mappings is equivalent to the
super fixed point property for asymptotically nonexpansive mappings in the
intermediate sense (see Section 2 for the definition). In particular, we
obtain fixed point theorems for asymptotically nonexpansive mappings in both
uniformly nonsquare and uniformly noncreasy Banach spaces. In Section 3, the
above results are extended for commuting families of asymptotically
nonexpansive mappings in the intermediate sense.

It was shown in \cite[Th. 10]{DoLeTu} that every Banach space $X$ which
contains an isomorphic copy of $c_{0}$ fails the fixed point property for
asymptotically nonexpansive mappings. Our results support the conjecture
that the fixed point property for nonexpansive mappings and for
asymptotically nonexpansive mappings are equivalent which would imply the
failure of the FPP inside isomorphic copies of $c_{0}.$


\section{Main result}

Let $X$ and $Y$ be Banach spaces and let $0<\varepsilon <1.$ A linear map $%
T:Y\rightarrow X$ is an $\varepsilon $-isometry if%
\begin{equation*}
\left( 1-\varepsilon \right) \left\Vert y\right\Vert \leq \left\Vert
Ty\right\Vert \leq \left( 1+\varepsilon \right) \left\Vert y\right\Vert
\end{equation*}%
for all $y\in Y$. Recall that $Y$ is said to be finitely representable in $X$
if for each $\varepsilon \in \left( 0,1\right) $ and every finite
dimensional subspace $M\subset Y$ there exists an $\varepsilon $-isometry $%
T:M\rightarrow X$.

We say that $X$ is superreflexive if every Banach space $Y$ which is
finitely representable in $X$ is reflexive. A Banach space $X$ has the super
fixed point property for nonexpansive mappings (SFPP) if every Banach space $%
Y$ which is finitely representable in $X$ has FPP. It follows from the
result of van Dulst and Pach \cite[Th. 3.2]{DuPa} that SFPP implies
superreflexivity.

The notion of finite representability is closely related with the
construction of the Banach space ultrapower. Let $\mathcal{U}$ be an
ultrafilter defined on a set $I$. The ultrapower $\widetilde{X}$ (or $(X)_{%
\mathcal{U}}$) of a Banach space $X$ is the quotient space of
\begin{equation*}
l_{\infty }(X)=\left\{ (x_{n}):x_{n}\in X\text{\ for all }n\in I\text{ and }%
\left\Vert (x_{n})\right\Vert =\sup_{n}\left\Vert x_{n}\right\Vert <\infty
\right\}
\end{equation*}%
by
\begin{equation*}
\left\{ (x_{n})\in l_{\infty }(X):\lim\limits_{n\rightarrow \mathcal{U}%
}\left\Vert x_{n}\right\Vert =0\right\} .
\end{equation*}%
Here $\lim\limits_{n\rightarrow \mathcal{U}}$ denotes the ultralimit over $%
\mathcal{U}$. One can prove that the quotient norm on $(X)_{\mathcal{U}}$ is
given by
\begin{equation*}
\left\Vert (x_{n})_{\mathcal{U}}\right\Vert =\lim\limits_{n\rightarrow
\mathcal{U}}\left\Vert x_{n}\right\Vert ,
\end{equation*}%
where $(x_{n})_{\mathcal{U}}$ is the equivalence class of $(x_{n})$. It is
also clear that $X$ is isometric to a subspace of $(X)_{\mathcal{U}}$ by the
mapping $x\rightarrow (x)_{\mathcal{U}}$.

The connection between ultrapowers and finite representability was observed
independently by Henson and Moore \cite{HeMo}, and Stern \cite{St} (see also
\cite{AkKh, He, Si}).

\begin{theorem}
\label{finitely}A Banach space $Y$ is finitely representable in $X$ if and
only if there exists an ultrafilter $\mathcal{U}$ such that $Y$ is isometric
to a subspace of $(X)_{\mathcal{U}}$.
\end{theorem}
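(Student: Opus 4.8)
The plan is to prove the two implications separately, with the forward implication carrying essentially all of the work. \emph{Sufficiency.} Suppose $Y$ is isometric to a subspace of an ultrapower $(X)_{\mathcal{U}}$. I would first record the general fact that $(X)_{\mathcal{U}}$ is itself finitely representable in $X$. Given a finite-dimensional $M\subset (X)_{\mathcal{U}}$ with basis $u_1,\dots,u_k$ and representatives $u_j=(x_n^{(j)})_{\mathcal{U}}$, define linear maps $S_n\colon M\to X$ by $S_n u_j=x_n^{(j)}$. For a fixed $u=\sum_j a_j u_j$ one has $\lim_{n\to\mathcal{U}}\|S_n u\|=\|u\|$; applying this to the finitely many points of a $\delta$-net of the unit sphere of $M$ (and to the $u_j$ themselves, so as to bound $\|S_n\|$ uniformly over a set in $\mathcal{U}$), then intersecting the corresponding members of $\mathcal{U}$, produces an index $n$ for which $S_n$ is simultaneously almost isometric on the net; a standard perturbation estimate in the finite-dimensional space $M$ then upgrades this to an $\varepsilon$-isometry on all of $M$. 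Since finite representability is transitive (compose almost-isometries) and a subspace is trivially finitely representable in the ambient space, $Y$ is finitely representable in $X$.

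\emph{Necessity.} Assume $Y$ is finitely representable in $X$. I would take as index set $I$ the family of pairs $i=(M,\varepsilon)$ with $M$ a finite-dimensional subspace of $Y$ and $\varepsilon\in(0,1)$, directed by $(M_1,\varepsilon_1)\preceq(M_2,\varepsilon_2)\iff M_1\subseteq M_2$ and $\varepsilon_1\geq\varepsilon_2$ (upper bounds exist via $(M_1+M_2,\min\{\varepsilon_1,\varepsilon_2\})$). The tails $A_i=\{j\in I:j\succeq i\}$ form a filter base, which I extend to an ultrafilter $\mathcal{U}$ on $I$. For each $i=(M,\varepsilon)$ finite representability furnishes an $\varepsilon$-isometry $T_i\colon M\to X$. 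Define $J\colon Y\to (X)_{\mathcal{U}}$ by letting $J(y)$ be the class of the family $(x_i)_{i\in I}$ with $x_i=T_i y$ whenever $y\in M$ (where $i=(M,\varepsilon)$) and $x_i=0$ otherwise; this family is bounded since $\|x_i\|\leq(1+\varepsilon)\|y\|\leq 2\|y\|$.

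It remains to check that $J$ is a linear isometry. Isometry is immediate: for $y\neq 0$ and $\delta>0$, every index $i\succeq(\mathrm{span}\{y\},\delta)$ satisfies $\big|\,\|x_i\|-\|y\|\,\big|\leq\delta\|y\|$, and those indices form a set in $\mathcal{U}$, so $\|J(y)\|=\lim_{i\to\mathcal{U}}\|x_i\|=\|y\|$. Linearity is the one delicate point, precisely because each $T_i$ is linear only on its own subspace: given $y_1,y_2\in Y$ and scalars $a,b$, for every index $i=(M,\varepsilon)$ with $M\supseteq\mathrm{span}\{y_1,y_2\}$ the relevant coordinates of $J(ay_1+by_2)$ and of $aJ(y_1)+bJ(y_2)$ coincide by linearity of $T_i$ on $M$, and such indices again form a set in $\mathcal{U}$; hence the two classes are equal. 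Thus $J$ embeds $Y$ isometrically into $(X)_{\mathcal{U}}$. I expect the main obstacle to be organizational rather than computational — arranging the index set and ultrafilter so that ``$\mathcal{U}$-eventually'' means exactly ``arbitrarily good near-isometry on arbitrarily large finite-dimensional subspaces'' — together with the care needed in the linearity verification just described.
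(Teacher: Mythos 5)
Your proof is correct. Note that the paper does not actually prove Theorem \ref{finitely}: it is quoted as a classical result of Henson--Moore and Stern, with references to \cite{HeMo}, \cite{St} and the surveys \cite{AkKh, He, Si}. Your argument is the standard one found in those sources --- the ``tails of $(M,\varepsilon)$'' ultrafilter for necessity, and the $\delta$-net plus finite intersection argument showing $(X)_{\mathcal{U}}$ is finitely representable in $X$ for sufficiency --- and both halves, including the delicate points you flag (the uniform bound on $\|S_n\|$ and the $\mathcal{U}$-almost-everywhere verification of linearity of $J$), are handled correctly.
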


It follows from the above theorem that $X$ has SFPP iff every ultrapower $%
(X)_{\mathcal{U}}$ has FPP.

In 1980, the Banach space ultrapower construction was applied in fixed point
theory by Maurey \cite{Ma} who proved the fixed point property for all
reflexive subspaces of $L_{1}[0,1]$ and the weak fixed point property for $%
c_{0}$ and $H^{1}$. Inspired by \cite{KiMaSh}, we apply this construction to
asymptotically nonexpansive mappings in a slightly more general setting.
Recall that a mapping \thinspace $T:C\rightarrow C$ is said to be
asymptotically nonexpansive in the intermediate sense if $T$ is continuous
and%
\begin{equation}
\limsup_{n\rightarrow \infty }\sup_{x,y\in C}(\Vert T^{n}x-T^{n}y\Vert
-\Vert x-y\Vert )\leq 0  \label{inter}
\end{equation}%
(in the original definition, in\ \cite{BrKuRe}, $T$ was assumed to be
uniformly continuous). In particular, the condition (\ref{inter}) is
satisfied if $\limsup_{n\rightarrow \infty }|T^{n}|\leq 1$, where $|T^{n}|$
denotes the (exact) Lipschitz constant of $T^{n}$ (and $C$ is bounded).

Let $C$ be a nonempty bounded closed and convex subset of a Banach space $X$
and $T:C\rightarrow C$ be asymptotically nonexpansive in the intermediate
sense. Take a free ultrafilter $p$ on $\mathbb{N}$ and denote by $\widetilde{%
C}\subset (X)_{p}$ the set
\begin{equation*}
\widetilde{C}=\left\{ (x_{n})_{p}\in (X)_{p}:x_{n}\in C\text{\ for all }n\in
\mathbb{N}\right\} .
\end{equation*}%
Let $(\mathcal{N},\preceq )$ be a directed set, where
\begin{equation*}
\mathcal{N}=\left\{ (\alpha _{n})\in \mathbb{N}^{\mathbb{N}}:\alpha
_{0}<\alpha _{1}<...<\alpha _{n}<...\text{ }\right\}
\end{equation*}%
is a family of all increasing sequences of natural numbers directed by the
relation $(\alpha _{n})\preceq (\beta _{n})$ iff $\alpha _{n}\leq \beta _{n}$
for every $n\in \mathbb{N}$. Notice that if $(x_{n})_{p},(y_{n})_{p}\in
\widetilde{C}$ and $(\alpha _{n})\in \mathcal{N},$ then
\begin{equation*}
\lim_{n\rightarrow p}(\Vert T^{\alpha _{n}}x_{n}-T^{\alpha _{n}}y_{n}\Vert
-\Vert x_{n}-y_{n}\Vert )\leq \limsup_{n\rightarrow \infty }\sup_{x,y\in
C}(\Vert T^{n}x-T^{n}y\Vert -\Vert x-y\Vert )\leq 0.
\end{equation*}%
Therefore, we may extend the mapping $T$ by setting, unambiguosly,%
\begin{equation}
\widehat{T}_{(\alpha _{n})}(x_{n})_{p}=(T^{\alpha _{n}}x_{n})_{p}.
\label{def1}
\end{equation}%
It is not difficult to see that $\widehat{T}_{(\alpha _{n})}:\widetilde{C}%
\rightarrow \widetilde{C}$ is nonexpansive for every $(\alpha _{n})\in
\mathcal{N}$. For $x\in C,$ we shall write $\dot{x}=(x)_{p}=(x,x,...)_{p}.$

\begin{lemma}
\label{Lem1}Let $T:C\rightarrow C$ be asymptotically nonexpansive\ in the
intermediate sense and suppose that there exists $\widetilde{y}\in
\widetilde{C}$ such that
\begin{equation}
\widehat{T}_{(\alpha _{n})}\widetilde{y}=\widetilde{y}  \label{contr}
\end{equation}%
for all $(\alpha _{n})\in \mathcal{N}.$ Let $\left\Vert \widetilde{y}-\dot{x}%
_{0}\right\Vert <\delta $ for some $x_{0}\in C$ and $\delta >0$. Then, for
every $\varepsilon >0$ there exist $x\in C$ and $n_{0}\in \mathbb{N}$ such
that $\left\Vert x-x_{0}\right\Vert <\delta $ and $\Vert T^{n}x-x\Vert
<\varepsilon $ for every $n\geq n_{0}.$
\end{lemma}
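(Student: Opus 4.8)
The plan is to work with a fixed representative of $\widetilde{y}$ and to reinterpret the hypothesis (\ref{contr}) as a family of ultralimit identities. Write $\widetilde{y}=(y_{n})_{p}$ with $y_{n}\in C$ for every $n$, which is legitimate by the definition of $\widetilde{C}$. For a given $(\alpha _{n})\in \mathcal{N}$, the equality $\widehat{T}_{(\alpha _{n})}\widetilde{y}=\widetilde{y}$ means $(T^{\alpha _{n}}y_{n})_{p}=(y_{n})_{p}$, i.e.
\begin{equation*}
\lim_{n\rightarrow p}\Vert T^{\alpha _{n}}y_{n}-y_{n}\Vert =0,
\end{equation*}
and by (\ref{contr}) this holds for \emph{every} increasing sequence $(\alpha _{n})$. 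Since $\Vert \widetilde{y}-\dot{x}_{0}\Vert =\lim_{n\rightarrow p}\Vert y_{n}-x_{0}\Vert <\delta $, the set $A=\{n\in \mathbb{N}:\Vert y_{n}-x_{0}\Vert <\delta \}$ belongs to $p$. I will show that $x=y_{m}$ for a suitable $m\in A$ is the desired point; any such $x$ automatically satisfies $\Vert x-x_{0}\Vert <\delta $.

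The heart of the argument is the claim that there exist $m\in A$ and $n_{0}\in \mathbb{N}$ with $\Vert T^{n}y_{m}-y_{m}\Vert <\varepsilon $ for all $n\geq n_{0}$. I would establish this by contradiction. If it fails, then for every $m\in A$ the set $S_{m}=\{n\in \mathbb{N}:\Vert T^{n}y_{m}-y_{m}\Vert \geq \varepsilon \}$ is cofinal in $\mathbb{N}$. Using cofinality, construct by induction a strictly increasing sequence $(\alpha _{n})\in \mathcal{N}$: put $\alpha _{0}\in S_{0}$ if $0\in A$ and $\alpha _{0}=0$ otherwise; having chosen $\alpha _{0}<\dots <\alpha _{n}$, pick $\alpha _{n+1}\in S_{n+1}$ with $\alpha _{n+1}>\alpha _{n}$ when $n+1\in A$ and $\alpha _{n+1}=\alpha _{n}+1$ otherwise. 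Then $(\alpha _{n})$ is strictly increasing, hence belongs to $\mathcal{N}$, and $\Vert T^{\alpha _{n}}y_{n}-y_{n}\Vert \geq \varepsilon $ for every $n\in A$. Since $A\in p$, this gives $\lim_{n\rightarrow p}\Vert T^{\alpha _{n}}y_{n}-y_{n}\Vert \geq \varepsilon >0$, contradicting the displayed identity for this very $(\alpha _{n})$. The claim follows, and taking $x=y_{m}$ completes the proof.

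The one genuinely delicate point is the diagonalization: the sequence $(\alpha _{n})$ has to be defined at \emph{all} indices and be strictly increasing — so that it lies in $\mathcal{N}$ and (\ref{contr}) applies to it — while still meeting $S_{n}$ on the $p$-large set $A$. Cofinality of each $S_{m}$ is exactly what makes the induction succeed, and the indices outside $A$ serve as harmless padding. Everything else is routine bookkeeping with ultralimits: that equality of classes in $(X)_{p}$ is equivalent to the vanishing of the ultralimit of the norms, and that a nonnegative quantity which is $\geq \varepsilon $ on a member of $p$ has ultralimit $\geq \varepsilon $.
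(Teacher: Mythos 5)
Your proof is correct and follows essentially the same route as the paper: argue by contradiction and diagonalize to build a single increasing sequence $(\alpha_n)\in\mathcal{N}$ with $\Vert T^{\alpha_n}y_n-y_n\Vert\geq\varepsilon$ on a $p$-large set, contradicting $\widehat{T}_{(\alpha_n)}\widetilde{y}=\widetilde{y}$. The only (cosmetic) difference is that the paper first replaces the representative $(y_n)$ so that $\Vert y_n-x_0\Vert<\delta$ holds for \emph{all} $n$, whereas you keep the original representative and pad the construction outside the set $A\in p$; both devices are equally valid.
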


\begin{proof}
Since $\left\Vert \widetilde{y}-\dot{x}_{0}\right\Vert <\delta ,$ there
exists a sequence $(y_{n})$ in $C$ such that $\left\Vert
y_{n}-x_{0}\right\Vert <\delta $ for all $n\in \mathbb{N}$ and $\widetilde{y}%
=$ $(y_{n})_{p}.$ Assume, conversely to our claim, that there exists $%
\varepsilon _{0}>0$ such that for every $x\in C$ and $n_{0}\in \mathbb{N}$
there exists $n\geq n_{0}$ such that $\left\Vert x-x_{0}\right\Vert \geq
\delta $ or $\Vert T^{n}x-x\Vert \geq \varepsilon _{0}.$ We shall define a
sequence $(\beta _{n})$ by induction. For $n=0$ and $y_{0}\in C$, there
exists $\beta _{0}$ such that $\Vert T^{\beta _{0}}y_{0}-y_{0}\Vert \geq
\varepsilon _{0}$. Suppose that we have chosen\ $\beta _{0}<\beta
_{1}<...<\beta _{n}$ such that $\Vert T^{\beta _{i}}y_{i}-y_{i}\Vert \geq
\varepsilon _{0}$ for $i=0,1,...,n.$ By assumption, since $\left\Vert
y_{n+1}-x_{0}\right\Vert <\delta $, there exists $\beta _{n+1}>\beta _{n}$
such that $\Vert T^{\beta _{n+1}}y_{n+1}-y_{n+1}\Vert \geq \varepsilon _{0}$%
. (To be more precise, we can define, for example, $\beta _{n+1}$ as the
minimum of $\{\beta >\beta _{n}:\Vert T^{\beta }y_{n+1}-y_{n+1}\Vert \geq
\varepsilon _{0}\}$). Thus we obtain a sequence $(\beta _{n})\in \mathcal{N}$
such that $\Vert T^{\beta _{n}}y_{n}-y_{n}\Vert \geq \varepsilon _{0}$ for
all $n\in \mathbb{N}.$ Hence $\left\Vert \widehat{T}_{(\beta _{n})}%
\widetilde{y}-\widetilde{y}\right\Vert \geq \varepsilon _{0}$, a
contradiction with (\ref{contr}).
\end{proof}

A Banach space $X$ is said to have the fixed point property for
asymptotically nonexpansive mappings (in the intermediate sense) if every
asymptotically nonexpansive (in the intermediate sense) self-mapping acting
on a nonempty bounded closed and convex set $C\subset X$ has a fixed point.

\begin{theorem}
\label{Th1}Assume that $X$ has the super fixed point property for
nonexpansive mappings. Then $X$ has the fixed point property for
asymptotically nonexpansive mappings in the intermediate sense.
\end{theorem}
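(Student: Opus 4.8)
The plan is to transport the asymptotic fixed point problem for $T$ into the ultrapower, where it turns into an ordinary common fixed point problem for the commuting family $\{\widehat{T}_{(\alpha _{n})}\}$, solve it there, and then pull the solution back to $C$.

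First I would record that, since $X$ has SFPP, Theorem \ref{finitely} (and the remark following it) gives that every ultrapower $(X)_{p}$ has FPP; consequently every bounded closed convex subset of $(X)_{p}$ has FPP, and a fortiori the conditional fixed point property. Since $X$, hence $(X)_{p}$, is superreflexive, $\widetilde{C}$ is a weakly compact convex subset of $(X)_{p}$ with the FPP and the conditional FPP. A direct computation gives $\widehat{T}_{(\alpha _{n})}\circ \widehat{T}_{(\beta _{n})}=\widehat{T}_{(\alpha _{n}+\beta _{n})}=\widehat{T}_{(\beta _{n})}\circ \widehat{T}_{(\alpha _{n})}$, so $\{\widehat{T}_{(\alpha _{n})}:(\alpha _{n})\in \mathcal{N}\}$ is a commuting family of nonexpansive self-mappings of $\widetilde{C}$. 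By Bruck's common fixed point theorem for commuting families of nonexpansive mappings on a weakly compact convex set having the FPP and the conditional FPP, this family has a common fixed point $\widetilde{y}\in \widetilde{C}$; that is, $\widetilde{y}$ satisfies (\ref{contr}) for every $(\alpha _{n})\in \mathcal{N}$. Up to here the argument essentially follows \cite{KiMaSh}.

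Next I would use Lemma \ref{Lem1} to pass from $\widetilde{y}$ to a genuine fixed point of $T$. The point of Lemma \ref{Lem1} is that it produces, for each $\varepsilon >0$, a point $x\in C$ with $\limsup _{n}\Vert T^{n}x-x\Vert \leq \varepsilon $ which moreover lies within $\delta $ of the prescribed $x_{0}$, for any $\delta >\Vert \widetilde{y}-\dot{x}_{0}\Vert $. I would exploit this localization to construct inductively a sequence $(x_{k})$ in $C$ and common fixed points $\widetilde{y}_{k}$ of the family such that $\Vert \widetilde{y}_{k}-\dot{x}_{k}\Vert <\delta _{k}$, $\Vert x_{k+1}-x_{k}\Vert <\delta _{k}$ and $\limsup _{n}\Vert T^{n}x_{k+1}-x_{k+1}\Vert <\varepsilon _{k+1}$, with $\varepsilon _{k},\delta _{k}\rightarrow 0$ and $\sum _{k}\delta _{k}<\infty $; then $(x_{k})$ is Cauchy and I set $x_{\ast }=\lim _{k}x_{k}$. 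Applying (\ref{inter}) to the pair $(x_{\ast },x_{k})$ yields
\[
\limsup _{n}\Vert T^{n}x_{\ast }-x_{\ast }\Vert \leq 2\Vert x_{\ast }-x_{k}\Vert +\varepsilon _{k}\longrightarrow 0\quad (k\rightarrow \infty ),
\]
so $T^{n}x_{\ast }\rightarrow x_{\ast }$, whence $Tx_{\ast }=T(\lim _{n}T^{n}x_{\ast })=\lim _{n}T^{n+1}x_{\ast }=x_{\ast }$ by the continuity of $T$.

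The step I expect to be the main obstacle is the inductive construction of the $\widetilde{y}_{k}$: given $z\in C$ with $\limsup _{n}\Vert T^{n}z-z\Vert <\varepsilon $, one must exhibit a common fixed point of $\{\widehat{T}_{(\alpha _{n})}\}$ in $\widetilde{C}$ lying within $O(\varepsilon )$ of $\dot{z}$, so that Lemma \ref{Lem1} can be reapplied without the iterates $x_{k}$ escaping. The obvious candidate is $\widetilde{w}=(T^{n}z)_{p}$: one checks that $\Vert \widetilde{w}-\dot{z}\Vert <\varepsilon $ and that $\Vert \widehat{T}_{(\alpha _{n})}\widetilde{w}-\widetilde{w}\Vert \leq 2\varepsilon $ for every $(\alpha _{n})\in \mathcal{N}$, so $\widetilde{w}$ is an $\varepsilon $-perturbation of $\dot{z}$ which is a $2\varepsilon $-approximate common fixed point of the family. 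Upgrading $\widetilde{w}$ to an exact common fixed point without moving far is the delicate point, since the approximate fixed point sets of $T$ need not be convex (the iterates $T^{n}$ need not be affine); I expect it to require a careful reapplication of the fixed point machinery — Bruck's theorem, or Lemma \ref{Lem1} itself — to a suitably chosen convex subset of $\widetilde{C}$ built around $\widetilde{w}$.
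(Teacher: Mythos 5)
Your overall strategy---pass to the ultrapower, obtain a common fixed point of the commuting nonexpansive family $\{\widehat{T}_{(\alpha _{n})}\}$ via Bruck's theorem, then run an inductive localization through Lemma \ref{Lem1} to build a Cauchy sequence of increasingly good approximate fixed points---is exactly the paper's, and your concluding limit argument ($\limsup_{n}\Vert T^{n}x_{\ast }-x_{\ast }\Vert \leq 2\Vert x_{\ast }-x_{k}\Vert +\varepsilon _{k}$, then continuity) is correct. However, the step you yourself flag as ``the delicate point'' is the actual content of the proof, and you do not supply it: given $x_{j}$ with $\Vert T^{n}x_{j}-x_{j}\Vert <\varepsilon ^{j}$ for all $n\geq n_{j}$, one must produce an \emph{exact} common fixed point of the whole family within $O(\varepsilon ^{j})$ of $\dot{x}_{j}$, so that Lemma \ref{Lem1} can be reapplied. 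Your candidate $\widetilde{w}=(T^{n}z)_{p}$ is only a $2\varepsilon $-approximate common fixed point; as you observe, the approximate fixed point sets need not be convex, and a ball around $\widetilde{w}$ is convex but not invariant (nonexpansiveness only maps the $r$-ball about $\widetilde{w}$ into the $(r+2\varepsilon )$-ball). So the induction does not close as written; this is a genuine missing idea, not a routine verification.

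The paper fills the gap with the set
\begin{equation*}
D_{j}=\Bigl\{ \widetilde{y}\in \widetilde{C}:\limsup_{(\alpha _{n})\in \mathcal{N}}\Vert \widehat{T}_{(\alpha _{n})}\dot{x}_{j}-\widetilde{y}\Vert \leq \varepsilon ^{j}\Bigr\} ,
\end{equation*}
the set of points asymptotically $\varepsilon ^{j}$-close to the net $(\widehat{T}_{(\alpha _{n})}\dot{x}_{j})_{(\alpha _{n})\in \mathcal{N}}$. This set is closed and convex (a sublevel set of a $\limsup $ of norms), nonempty because $\dot{x}_{j}\in D_{j}$---this is precisely where the inductive hypothesis $\Vert T^{n}x_{j}-x_{j}\Vert <\varepsilon ^{j}$ for $n\geq n_{j}$ enters---and invariant under every $\widehat{T}_{(\beta _{n})}$, since
\begin{equation*}
\limsup_{(\alpha _{n})}\Vert \widehat{T}_{(\alpha _{n})}\dot{x}_{j}-\widehat{T}_{(\beta _{n})}\widetilde{y}\Vert =\limsup_{(\alpha _{n})}\Vert \widehat{T}_{(\alpha _{n}+\beta _{n})}\dot{x}_{j}-\widehat{T}_{(\beta _{n})}\widetilde{y}\Vert \leq \limsup_{(\alpha _{n})}\Vert \widehat{T}_{(\alpha _{n})}\dot{x}_{j}-\widetilde{y}\Vert
\end{equation*}
by cofinality of the shift $(\alpha _{n})\mapsto (\alpha _{n}+\beta _{n})$ and nonexpansiveness of $\widehat{T}_{(\beta _{n})}$. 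Bruck's theorem applied to $D_{j}$ then yields an exact common fixed point $\widetilde{y}_{j}\in D_{j}$, and the triangle inequality through $\widehat{T}_{(\alpha _{n})}\dot{x}_{j}$ (using that both $\dot{x}_{j}$ and $\widetilde{y}_{j}$ lie in $D_{j}$) gives $\Vert \widetilde{y}_{j}-\dot{x}_{j}\Vert \leq 2\varepsilon ^{j}$, which is exactly the hypothesis Lemma \ref{Lem1} needs to produce $x_{j+1}$ with $\Vert x_{j+1}-x_{j}\Vert <3\varepsilon ^{j}$. Note that the invariant convex set is built around the asymptotic orbit of $\dot{x}_{j}$, not around your $\widetilde{w}$; supplying this construction is what your proposal still requires.
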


\begin{proof}
Assume that $X$ has the super fixed point property for nonexpansive
mappings. Let $T:C\rightarrow C$ be an asymptotically nonexpansive mapping
in the intermediate sense acting on a nonempty bounded closed and convex set
$C\subset X.$ By \cite[Th. 3.2]{DuPa}, $X$ is superreflexive and hence $C$
is weakly compact. Without loss of generality we can assume that $\diam C=1.$
Take a free ultrafilter $p$ on $\mathbb{N},$ $(\alpha _{n})\in \mathcal{N},$
and define $\widehat{T}_{(\alpha _{n})}$ by (\ref{def1}). Notice that for
every $(\alpha _{n}),(\beta _{n})\in \mathcal{N}$ and any $(z_{n})_{p}\in $ $%
\widetilde{C}$,
\begin{equation*}
(\widehat{T}_{(\alpha _{n})}\circ \widehat{T}_{(\beta
_{n})})(z_{n})_{p}=(T^{\alpha _{n}}T^{\beta _{n}}z_{n})_{p}=(\widehat{T}%
_{(\beta _{n})}\circ \widehat{T}_{(\alpha _{n})})(z_{n})_{p}.
\end{equation*}%
It follows from the Bruck theorem (see \cite[Th. 1]{Br2}) that there exists $%
\widetilde{y}_{0}\in \widetilde{C}$ such that $\widehat{T}_{(\alpha _{n})}%
\widetilde{y}_{0}=\widetilde{y}_{0}$ for all $(\alpha _{n})\in \mathcal{N}$
(a similar argument but for two mappings was used in \cite[Th. 4.1]{KiMaSh}%
). Fix $\varepsilon <1$ and $x_{0}\in C.$ We shall define by induction a
sequence $(n_{j})$ of natural numbers and a sequence $(x_{j})$ of elements
in $C$ such that
\begin{equation}
\left\Vert x_{j}-x_{j-1}\right\Vert <3\varepsilon ^{j-1}\text{ and }\Vert
T^{n}x_{j}-x_{j}\Vert <\varepsilon ^{j}\text{ for every }n\geq n_{j},j\geq 1.
\label{seq1}
\end{equation}%
By Lemma \ref{Lem1}, there exist $x_{1}\in C$ and $n_{1}\in \mathbb{N}$ such
that $\Vert T^{n}x_{1}-x_{1}\Vert <\varepsilon $ for every $n\geq n_{1}$ and
$\left\Vert x_{1}-x_{0}\right\Vert \leq \diam C<3$.

Suppose that we have chosen\ natural numbers $n_{1},...,n_{j}$ and $%
x_{1},...,x_{j}\in C$ $(j\geq 1)$ such that%
\begin{equation*}
\left\Vert x_{i}-x_{i-1}\right\Vert <3\varepsilon ^{i-1}\text{ and }\Vert
T^{n}x_{i}-x_{i}\Vert <\varepsilon ^{i}\text{ for every }n\geq n_{i},1\leq
i\leq j.
\end{equation*}%
Let
\begin{equation*}
D_{j}=\left\{ \widetilde{y}=(y_{n})_{p}\in \widetilde{C}:\limsup_{(\alpha
_{n})\in \mathcal{N}}\Vert \widehat{T}_{(\alpha _{n})}\dot{x}_{j}-\widetilde{%
y}\Vert \leq \varepsilon ^{j}\right\} ,
\end{equation*}%
where $\dot{x}_{j}=(x_{j},x_{j},...)_{p}$ and%
\begin{equation*}
\limsup_{(\alpha _{n})\in \mathcal{N}}\Vert \widehat{T}_{(\alpha _{n})}\dot{x%
}_{j}-\widetilde{y}\Vert =\inf_{(\alpha _{n})\in \mathcal{N}}\sup_{(\beta
_{n})\succeq (\alpha _{n})}\lim_{n\rightarrow p}\left\Vert T^{\beta
_{n}}x_{j}-y_{n}\right\Vert
\end{equation*}%
denotes the upper limit of the net $(\Vert \widehat{T}_{(\alpha _{n})}\dot{x}%
_{j}-\widetilde{y}\Vert )_{(\alpha _{n})\in \mathcal{N}}.$ It is not
difficult to see that $D_{j}$ is a nonempty closed and convex subset of $%
\widetilde{C}$ (notice that $\dot{x}_{j}\in D_{j}$). Futhermore, for a fixed
$(\beta _{n})\in \mathcal{N}$ and $\widetilde{y}\in D_{j},$%
\begin{align*}
& \limsup_{(\alpha _{n})\in \mathcal{N}}\Vert \widehat{T}_{(\alpha _{n})}%
\dot{x}_{j}-\widehat{T}_{(\beta _{n})}\widetilde{y}\Vert =\limsup_{(\alpha
_{n})\in \mathcal{N}}\Vert \widehat{T}_{(\alpha _{n}+\beta _{n})}\dot{x}_{j}-%
\widehat{T}_{(\beta _{n})}\widetilde{y}\Vert \\
& \leq \limsup_{(\alpha _{n})\in \mathcal{N}}\Vert \widehat{T}_{(\alpha
_{n})}\dot{x}_{j}-\widetilde{y}\Vert \leq \varepsilon ^{j},
\end{align*}%
and hence $\widehat{T}_{(\beta _{n})}(D_{j})\subset D_{j}$ for every $(\beta
_{n})\in \mathcal{N}.$ Again, by Bruck's theorem, there exists $\widetilde{y}%
_{j}\in D_{j}$ such that $\widehat{T}_{(\alpha _{n})}\widetilde{y}_{j}=%
\widetilde{y}_{j}$ for all $(\alpha _{n})\in \mathcal{N}.$ Notice that $%
\left\Vert \widetilde{y}_{j}-\dot{x}_{j}\right\Vert \leq 2\varepsilon
^{j}<3\varepsilon ^{j}$ and by Lemma \ref{Lem1}, there exist $x_{j+1}\in C$
and $n_{j+1}\in \mathbb{N}$ such that $\left\Vert x_{j+1}-x_{j}\right\Vert
<3\varepsilon ^{j}$ and $\Vert T^{n}x_{j+1}-x_{j+1}\Vert <\varepsilon ^{j+1}$
for every $n\geq n_{j+1}.$

Thus we obtain by induction a sequence $(n_{j})$ of natural numbers and a
sequence $(x_{j})$ of elements in $C$ such that (\ref{seq1}) is satisfied.
It follows that $(x_{j})$ is a Cauchy sequence converging to some $x\in C$.
Hence%
\begin{align*}
& \Vert T^{n}x-x\Vert \leq \Vert T^{n}x-T^{n}x_{j}\Vert +\Vert
T^{n}x_{j}-x_{j}\Vert +\Vert x_{j}-x\Vert \\
& \leq (\Vert T^{n}x-T^{n}x_{j}\Vert -\Vert x_{j}-x\Vert )+\varepsilon
^{j}+2\Vert x_{j}-x\Vert
\end{align*}%
for every $n\geq n_{j},j\geq 1$ and, consequently, $\lim_{n\rightarrow
\infty }$ $\Vert T^{n}x-x\Vert =0.$ Since $T$ is continuous, $Tx=x$.
\end{proof}

A Banach space $X$ is said to have the super fixed point property for
asymptotically nonexpansive mappings (in the intermediate sense) if every
Banach space $Y$ which is finitely representable in $X$ has the fixed point
property for asymptotically nonexpansive mappings (in the intermediate
sense). We can strengthen Theorem \ref{Th1} in the following way.

\begin{theorem}
\label{Th2}A Banach space $X$ has the super fixed point property for
nonexpansive mappings if and only if $X$ has the super fixed point property
for asymptotically nonexpansive mappings in the intermediate sense.
\end{theorem}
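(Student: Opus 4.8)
The plan is to prove the two implications separately; the backward direction is immediate, and the forward direction is a short deduction from Theorem~\ref{Th1} together with the transitivity of finite representability.

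For the direction $(\Leftarrow)$, suppose $X$ has the super fixed point property for asymptotically nonexpansive mappings in the intermediate sense. If $T:C\rightarrow C$ is nonexpansive on a nonempty bounded closed convex set, then each $T^{n}$ is nonexpansive, so $|T^{n}|\leq 1$ for all $n$, and since $C$ is bounded $T$ is asymptotically nonexpansive in the intermediate sense (it is also continuous). Hence, for every Banach space $Y$ finitely representable in $X$, the fixed point property for the wider class in particular yields FPP for nonexpansive mappings; thus $X$ has SFPP.

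For $(\Rightarrow)$, assume $X$ has SFPP. I would first record the standard fact that finite representability is transitive: if $Z$ is finitely representable in $Y$ and $Y$ is finitely representable in $X$, then $Z$ is finitely representable in $X$ (compose $\varepsilon$-isometries, or use Theorem~\ref{finitely} and the observation that a subspace of an ultrapower of an ultrapower of $X$ embeds isometrically into an ultrapower of $X$). Now fix an arbitrary Banach space $Z$ finitely representable in some $Y$ which is itself finitely representable in $X$. By transitivity $Z$ is finitely representable in $X$, and applying transitivity once more, every $W$ finitely representable in $Z$ is finitely representable in $X$ and therefore has FPP by the hypothesis on $X$. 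Consequently $Z$ has SFPP, and Theorem~\ref{Th1} shows that $Z$ has the fixed point property for asymptotically nonexpansive mappings in the intermediate sense. Since $Z$ was an arbitrary space finitely representable in $Y$, the space $Y$ has the super fixed point property for asymptotically nonexpansive mappings in the intermediate sense; and since $Y$ was an arbitrary space finitely representable in $X$, so has $X$.

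The only real point of care is the bookkeeping with finite representability: one must invoke transitivity twice — once to move $Z$ from being ``over $Y$'' to being ``over $X$'', and once more to see that $Z$ inherits the \emph{super} fixed point property (not merely FPP), which is exactly what is needed to apply Theorem~\ref{Th1} to $Z$. No further analysis is required, the substantive work having already been carried out in Lemma~\ref{Lem1} and Theorem~\ref{Th1}.
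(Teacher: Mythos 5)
Your proof is correct and follows essentially the same route as the paper: the whole content is that the super fixed point property passes to every space finitely representable in $X$ (the paper phrases this via Theorem~\ref{finitely} and ultrapowers, you via transitivity of finite representability, which are interchangeable here), after which Theorem~\ref{Th1} applied to that space gives the conclusion. Your extra layer with $Z$ finitely representable in $Y$ finitely representable in $X$ is harmless but unnecessary --- applying the argument directly to each $Y$ finitely representable in $X$ already yields the statement.
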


\begin{proof}
Assume that $X$ has SFPP for nonexpansive mappings and let $\mathcal{U}$ be
an ultrafilter defined on a set $I.$ By Theorem \ref{finitely}, $(X)_{%
\mathcal{U}}$ has SFPP, too, and it follows from Theorem \ref{Th1} that $%
(X)_{\mathcal{U}}$ has the fixed point property for asymptotically
nonexpansive mappings in the intermediate sense. By Theorem \ref{finitely}
again, $X$ has the super fixed point property for asymptotically
nonexpansive mappings in the intermediate sense. The reverse implication is
obvious.
\end{proof}

We conclude this section by giving some consequences of Theorem \ref{Th1}.
Recall \cite{Ja} that a Banach space is uniformly nonsquare if
\begin{equation*}
\sup_{x,y\in S_{X}}\min \left\{ \left\Vert x+y\right\Vert ,\left\Vert
x-y\right\Vert \right\} <2.
\end{equation*}%
Garc\'{\i}a Falset, Llor\'{e}ns Fuster and Mazcu\~{n}an Navarro (see \cite%
{GLM2}) solved a long-standing problem in metric fixed point theory by
proving that uniformly nonsquare Banach spaces have FPP and, in consequence,
SFPP for nonexpansive mappings.

\begin{corollary}
Let $C$ be a nonempty bounded closed and convex subset of a uniformly
nonsquare Banach space. Then every asymptotically nonexpansive in the
intermediate sense mapping $T:C\rightarrow C$ has a fixed point.
\end{corollary}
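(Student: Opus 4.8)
The plan is to deduce the statement from Theorem \ref{Th1}. By that theorem it suffices to check that every uniformly nonsquare Banach space $X$ has the super fixed point property for nonexpansive mappings, i.e., that every Banach space $Y$ which is finitely representable in $X$ has FPP.

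The key point, which is precisely the ``in consequence'' remark preceding the corollary, is that uniform nonsquareness is stable under finite representability. I would argue as follows. Suppose $X$ is uniformly nonsquare, so that $\sup_{x,y\in S_X}\min\{\|x+y\|,\|x-y\|\}=2-2\eta$ for some $\eta>0$, and let $Y$ be finitely representable in $X$. Fix $u,v\in S_Y$ and $\varepsilon\in(0,1)$, let $M\subset Y$ be the subspace spanned by $u$ and $v$, and choose an $\varepsilon$-isometry $T:M\to X$. After normalizing, $Tu/\|Tu\|$ and $Tv/\|Tv\|$ belong to $S_X$; for each choice of sign, $Tu/\|Tu\|\pm Tv/\|Tv\|$ differs in norm from $T(u\pm v)$ by at most $2\varepsilon$, while $\|T(u\pm v)\|\ge(1-\varepsilon)\|u\pm v\|\ge\|u\pm v\|-2\varepsilon$ since $\|u\pm v\|\le 2$. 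Hence $\min\{\|u+v\|,\|u-v\|\}\le 2-2\eta+4\varepsilon$, and letting $\varepsilon\to 0$ gives $\sup_{u,v\in S_Y}\min\{\|u+v\|,\|u-v\|\}\le 2-2\eta<2$. Thus $Y$ is uniformly nonsquare, and by the theorem of Garc\'{\i}a Falset, Llor\'{e}ns Fuster and Mazcu\~{n}an Navarro \cite{GLM2} it has FPP. Therefore $X$ has the super fixed point property for nonexpansive mappings.

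It remains to apply Theorem \ref{Th1}: since the uniformly nonsquare space $X$ has SFPP for nonexpansive mappings, it has the fixed point property for asymptotically nonexpansive mappings in the intermediate sense, and so the given mapping $T:C\to C$ has a fixed point. I anticipate no genuine obstacle here; the only work is the routine transfer estimate above, and one could equally well just quote the cited remark that uniformly nonsquare spaces have SFPP, in which case the corollary follows immediately from Theorem \ref{Th1}.
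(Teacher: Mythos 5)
Your proof is correct and follows exactly the route the paper intends: uniformly nonsquare spaces have FPP by \cite{GLM2}, uniform nonsquareness is a super-property (so such spaces have SFPP), and Theorem \ref{Th1} then yields the conclusion. The only difference is that you spell out the routine finite-representability transfer estimate, which the paper simply absorbs into the remark that these spaces have SFPP ``in consequence''; your estimate is accurate.
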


In \cite{Pr}, Prus introduced the notion of uniformly noncreasy spaces. A
real Banach space $X$ is said to be uniformly noncreasy if for every $%
\varepsilon >0$ there is $\delta >0$ such that if $f,g\in S_{X^{\ast }}$ and
$\Vert f-g\Vert \geq \varepsilon $, then $\diam S(f,g,\delta )\leq
\varepsilon $, where
\begin{equation*}
S(f,g,\delta )=\left\{ x\in B_{X}:f(x)\geq 1-\delta \text{\ }\wedge
\;g(x)\geq 1-\delta \right\}
\end{equation*}%
($\diam\emptyset =0)$. It is known that both uniformly convex and uniformly
smooth spaces are uniformly noncreasy. The Bynum space $l^{2,\infty }$,
which is $l^{2}$ space with the norm
\begin{equation*}
\Vert x\Vert _{2,\infty }=\max \left\{ \Vert x^{+}\Vert _{2},\Vert
x^{-}\Vert _{2}\right\} ,
\end{equation*}%
and the space $X_{\sqrt{2}}$, which is $l^{2}$ space with the norm
\begin{equation*}
\Vert x\Vert _{\sqrt{2}}=\max \left\{ \Vert x\Vert _{2},\sqrt{2}\Vert x\Vert
_{\infty }\right\} ,
\end{equation*}%
are examples of uniformly noncreasy spaces without normal structure. It was
proved in \cite{Pr} that all uniformly noncreasy spaces are superreflexive
and have SFPP. This yields

\begin{corollary}
Let $C$ be a nonempty bounded closed and convex subset of a uniformly
noncreasy Banach space. Then every asymptotically nonexpansive in the
intermediate sense mapping $T:C\rightarrow C$ has a fixed point.
\end{corollary}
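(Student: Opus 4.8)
The plan is to obtain this corollary directly from Theorem~\ref{Th1}, so that the only ingredient special to uniformly noncreasy spaces is Prus's structural result from \cite{Pr}: every uniformly noncreasy Banach space is superreflexive and has the super fixed point property for nonexpansive mappings. Let $X$ denote the ambient uniformly noncreasy space containing $C$. First I would invoke Prus's theorem to conclude that $X$ has SFPP for nonexpansive mappings. Then Theorem~\ref{Th1} applies word for word: it guarantees that any Banach space possessing SFPP for nonexpansive mappings has the fixed point property for asymptotically nonexpansive mappings in the intermediate sense. Applying this to the given $C$ and $T$, which by hypothesis is asymptotically nonexpansive in the intermediate sense and self-maps the nonempty bounded closed convex set $C\subset X$, yields a fixed point of $T$ and finishes the proof.

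There is essentially no obstacle to overcome at this stage; the corollary is a one-line deduction once Prus's characterization is on the table. The genuine work was done earlier in the paper, inside the proof of Theorem~\ref{Th1}: passing to the ultrapower $\widetilde C\subset (X)_p$, constructing the commuting family of nonexpansive extensions $\widehat T_{(\alpha_n)}$ indexed by the directed set $(\mathcal N,\preceq)$, applying Bruck's common fixed point theorem to produce a common fixed point $\widetilde y_0$ of all the $\widehat T_{(\alpha_n)}$, and then using Lemma~\ref{Lem1} together with the iterated construction of the sets $D_j$ to pull this back to a Cauchy sequence $(x_j)$ in $C$ whose limit $x$ satisfies $\lim_n\|T^n x-x\|=0$, hence $Tx=x$ by continuity. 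Superreflexivity, which Prus's result also supplies, is exactly what is needed there to ensure weak compactness of $C$.

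If one prefers a formulation not mentioning a fixed ambient space, one may instead cite Theorem~\ref{Th2} in place of Theorem~\ref{Th1}, but Theorem~\ref{Th1} is precisely what is required and is the cleaner reference. Finally, it would be worth noting in a remark that, since $l^{2,\infty}$ and $X_{\sqrt2}$ are uniformly noncreasy spaces lacking normal structure, the corollary produces fixed point theorems for asymptotically nonexpansive in the intermediate sense mappings in settings where the classical normal-structure-based techniques do not apply.
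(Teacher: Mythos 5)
Your proposal is correct and matches the paper's argument exactly: the corollary is obtained by citing Prus's result that uniformly noncreasy spaces are superreflexive and have SFPP, and then applying Theorem \ref{Th1}. Nothing further is needed.
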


Recently, a fixed point theorem in direct sums of two Banach spaces was
proved in \cite{Wi}. Assume that $X$ has SFPP (for nonexpansive mappings)
and $Y$ is uniformly convex, uniformly smooth or finite dimensional. Since
uniformly convex, uniformly smooth as well as finite dimensional spaces are
stable under passing to the Banach space ultrapowers and have uniform normal
structure, it follows from \cite[Th. 3.4]{Wi}, that $X\oplus Y$ with a
strictly monotone norm has SFPP. Thus we obtain the following theorem.

\begin{corollary}
Assume that $X$ has SFPP and $Y$ is uniformly convex, uniformly smooth or
finite dimensional. Then $X\oplus Y$ with a strictly monotone norm has the
fixed point property for asymptotically nonexpansive mappings in the
intermediate sense.
\end{corollary}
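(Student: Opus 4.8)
The plan is to reduce the statement to Theorem \ref{Th1} by first establishing that $X\oplus Y$ equipped with a strictly monotone norm enjoys the super fixed point property for nonexpansive mappings, and then invoking Theorem \ref{Th1} for that space. So the proof splits into a structural step (identifying that the hypotheses of the direct-sum result are met) and a one-line application.

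First I would recall the two relevant facts about the three admissible classes for $Y$. Uniform convexity, uniform smoothness and finite dimensionality are each preserved under the Banach space ultrapower construction: an ultrapower of a uniformly convex space is again uniformly convex with the same modulus of convexity, dually for uniform smoothness, and an ultrapower of a finite dimensional space is isometric to that space. Moreover, each of these classes has uniform normal structure. These are exactly the properties needed to apply \cite[Th. 3.4]{Wi}: since $X$ has SFPP and $Y$ is stable under passing to ultrapowers and has uniform normal structure, $X\oplus Y$ with a strictly monotone norm has SFPP for nonexpansive mappings. (This is the step the paragraph preceding the corollary is pointing at.)

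Having $X\oplus Y$ (with a strictly monotone norm) with SFPP for nonexpansive mappings, I would then apply Theorem \ref{Th1} verbatim to $Z=X\oplus Y$: every asymptotically nonexpansive in the intermediate sense self-mapping of a nonempty bounded closed convex subset of $Z$ has a fixed point, which is precisely the assertion. One could equally route through Theorem \ref{Th2}, but that is unnecessary here since we only need the fixed point property for $Z$ itself, not for all spaces finitely representable in it.

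The only genuine content lies in verifying the hypotheses of \cite[Th. 3.4]{Wi}, namely ultrapower-stability together with uniform normal structure for the three classes; once that bookkeeping is done the corollary is immediate, so I do not expect any real obstacle beyond correctly quoting the direct-sum theorem.
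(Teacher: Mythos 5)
Your proposal matches the paper's own argument: the preceding paragraph establishes that $X\oplus Y$ with a strictly monotone norm has SFPP via \cite[Th. 3.4]{Wi}, using exactly the ultrapower-stability and uniform normal structure of $Y$ that you cite, and the corollary then follows by applying Theorem \ref{Th1} to $X\oplus Y$. Your proof is correct and takes essentially the same route.
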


\section{Common fixed points}

In this section we generalize Theorem \ref{Th1} for a commuting family of
mappings. Let $\{T_{t}:t\in T\}$ be a commuting family of asymptotically
nonexpansive self-mappings in the intermediate sense acting on a nonempty
bounded closed and convex subset $C$ of a Banach space $X.$ Consider the set%
\begin{align*}
\mathcal{A}=& \left\{ \{(t_{1},\alpha _{1}),(t_{2},\alpha
_{2}),...,(t_{k},\alpha _{k})\}:t_{1},...,t_{k}\in T,\,t_{i}\neq t_{j}\text{
for }i\neq j,\right. \\
& \ \ \left. \alpha _{1},\alpha _{2},...,\alpha _{k}\in \mathbb{N}%
,k>0\right\} ,
\end{align*}%
directed by the relation%
\begin{equation*}
\{(t_{1},\alpha _{1}),(t_{2},\alpha _{2}),...,(t_{k},\alpha
_{k})\}\sqsubseteq \{(s_{1},\beta _{1}),(s_{2},\beta _{2}),...,(s_{m},\beta
_{m})\}
\end{equation*}%
iff
\begin{equation*}
\{t_{1},t_{2},...,t_{k}\}\subseteq \{s_{1},s_{2},...s_{m}\}\text{ and }%
\forall i~\forall j\ (t_{i}=s_{j}\Rightarrow \alpha _{i}\leq \beta _{j}).
\end{equation*}%
If $v=\{(t_{1},\alpha _{1}),(t_{2},\alpha _{2}),...,(t_{k},\alpha _{k})\}\in
\mathcal{A},$ write%
\begin{equation*}
T_{v}x=T_{t_{1}}^{\alpha _{1}}T_{t_{2}}^{\alpha _{2}}...T_{t_{k}}^{\alpha
_{k}}x,
\end{equation*}%
and let%
\begin{equation*}
\mathcal{D}=\left\{ (v_{n})\in \mathcal{A}^{\mathbb{N}}:\limsup_{n%
\rightarrow \infty }\sup_{x,y\in C}(\Vert T_{v_{n}}x-T_{v_{n}}y\Vert -\Vert
x-y\Vert )\leq 0\right\} .
\end{equation*}%
Note that $\mathcal{D}\neq \emptyset $ since $(\{(t,n)\})_{n\in \mathbb{N}%
}\in \mathcal{D}$ for every $t\in T.$ If $(v_{n}),(u_{n})\in \mathcal{D},$
define the relation $(v_{n})\preceq (u_{n})$ iff $v_{n}\sqsubseteq u_{n}$
for every $n\in \mathbb{N}$. It is not difficult to see that for every $%
(v_{n}),(u_{n})\in \mathcal{D}$ there exists $(w_{n})\in \mathcal{D}$ such
that $(v_{n})\preceq (w_{n})$ and $(u_{n})\preceq (w_{n}).$ Indeed, let%
\begin{equation*}
v_{n}=\{(t_{1}^{(n)},\alpha _{1}^{(n)}),(t_{2}^{(n)},\alpha
_{2}^{(n)}),...,(t_{k_{n}}^{(n)},\alpha _{k_{n}}^{(n)})\},
\end{equation*}%
\begin{equation*}
u_{n}=\{(s_{1}^{(n)},\beta _{1}^{(n)}),(s_{2}^{(n)},\beta
_{2}^{(n)}),...,(s_{m_{n}}^{(n)},\beta _{m_{n}}^{(n)})\},
\end{equation*}%
and put
\begin{equation}
w_{n}=\{(t_{1}^{(n)},\alpha _{1}^{(n)}),(t_{2}^{(n)},\alpha
_{2}^{(n)}),...,(t_{k_{n}}^{(n)},\alpha _{k_{n}}^{(n)}),(s_{1}^{(n)},\beta
_{1}^{(n)}),(s_{2}^{(n)},\beta _{2}^{(n)}),...,(s_{m_{n}}^{(n)},\beta
_{m_{n}}^{(n)})\},  \label{w_n}
\end{equation}%
$n\in \mathbb{N}$ (to shorten notation, we use the convention that if $%
t_{i}=s_{j}$ for some $i,j,$ then the pairs $(t_{i},\alpha
_{i}),(s_{j},\beta _{j})$ in $w_{n}$ are identified with one pair $%
(t_{i},\alpha _{i}+\beta _{j})$). Notice that%
\begin{align*}
s((T_{w_{n}}))& =\limsup_{n\rightarrow \infty }\sup_{x,y\in C}(\Vert
T_{v_{n}}T_{u_{n}}x-T_{v_{n}}T_{u_{n}}y\Vert -\Vert x-y\Vert ) \\
& \leq s((T_{v_{n}}))+s((T_{u_{n}}))\leq 0,
\end{align*}%
where%
\begin{equation*}
s((T_{v_{n}}))=\limsup_{n\rightarrow \infty }\sup_{x,y\in C}(\Vert
T_{v_{n}}x-T_{v_{n}}y\Vert -\Vert x-y\Vert ).
\end{equation*}%
Hence $(w_{n})\in \mathcal{D}$ and, clearly, $(v_{n})\preceq (w_{n})$ and $%
(u_{n})\preceq (w_{n}).$ Thus $(\mathcal{D},\preceq )$ is a directed set.

Let $p$ be a free ultrafilter on $\mathbb{N}.$ Then, for every $%
(x_{n})_{p},(y_{n})_{p}\in \widetilde{C}$ and $(v_{n})\in \mathcal{D},$%
\begin{equation}
\lim_{n\rightarrow p}(\Vert T_{v_{n}}x_{n}-T_{v_{n}}y_{n}\Vert -\Vert
x_{n}-y_{n}\Vert )\leq \limsup_{n\rightarrow \infty }\sup_{x,y\in C}(\Vert
T_{v_{n}}x-T_{v_{n}}y\Vert -\Vert x-y\Vert )\leq 0.  \label{estim}
\end{equation}%
Therefore, we may define unambiguously a mapping $\widehat{T}_{(v_{n})}:%
\widetilde{C}\rightarrow \widetilde{C},$ by setting%
\begin{equation}
\widehat{T}_{(v_{n})}(x_{n})_{p}=(T_{v_{n}}x_{n})_{p}.  \label{def2}
\end{equation}%
It follows from (\ref{estim}) that $\widehat{T}_{(v_{n})}$ is nonexpansive
for every $(v_{n})\in \mathcal{D}$.

We can now prove a counterpart of Lemma \ref{Lem1}.

\begin{lemma}
\label{Lem2}Let $\{T_{t}:t\in T\}$ be a commuting family of asymptotically
nonexpansive mappings in the intermediate sense acting on a nonempty bounded
closed and convex subset $C$ of a Banach space $X.$\ Suppose that there
exists $\widetilde{y}\in \widetilde{C}$ such that
\begin{equation}
\widehat{T}_{(v_{n})}\widetilde{y}=\widetilde{y}  \label{contr1}
\end{equation}%
for all $(v_{n})\in \mathcal{D}.$ Let $\left\Vert \widetilde{y}-\dot{x}%
_{0}\right\Vert <\delta $ for some $x_{0}\in C$ and $\delta >0$. Then, for
every $\varepsilon >0$ there exist $x\in C$ and $n\in \mathbb{N}$ such that $%
\left\Vert x-x_{0}\right\Vert <\delta $ and $\Vert T_{u}x-x\Vert
<\varepsilon $ for every $u\in \mathcal{D}^{\prime }(n),$ where%
\begin{align*}
\mathcal{D}^{\prime }(n)& =\left\{ v=\{(t_{1},\alpha _{1}),(t_{2},\alpha
_{2}),...,(t_{k},\alpha _{k})\}\in \mathcal{A}:\right.  \\
& \ \ \ \ \sup_{x,y\in C}(\Vert T_{v}x-T_{v}y\Vert -\Vert x-y\Vert )\leq
\frac{1}{n+1}\}.
\end{align*}
\end{lemma}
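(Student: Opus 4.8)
The plan is to follow the contradiction scheme of Lemma \ref{Lem1}, but the construction turns out to be simpler: membership of a sequence $(v_{n})$ in the directed set $\mathcal{D}$ imposes no monotonicity on the $v_{n}$, only the asymptotic estimate $\limsup_{n}\sup_{x,y\in C}(\Vert T_{v_{n}}x-T_{v_{n}}y\Vert -\Vert x-y\Vert )\leq 0$, and this estimate comes for free as soon as each $v_{n}$ is chosen in $\mathcal{D}^{\prime }(n)$, since then the $n$-th term of the sequence contributes at most $\frac{1}{n+1}$. So no inductive construction (such as the one needed to keep $(\beta _{n})$ increasing in Lemma \ref{Lem1}) is required here.

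First I would fix a representative $(y_{n})$ of $\widetilde{y}$ with $y_{n}\in C$ and $\Vert y_{n}-x_{0}\Vert <\delta $ for all $n$, using $\Vert \widetilde{y}-\dot{x}_{0}\Vert <\delta $. Then I would suppose, contrary to the claim, that there is $\varepsilon _{0}>0$ such that for every $x\in C$ and every $n\in \mathbb{N}$ either $\Vert x-x_{0}\Vert \geq \delta $, or there exists $u\in \mathcal{D}^{\prime }(n)$ with $\Vert T_{u}x-x\Vert \geq \varepsilon _{0}$. Applying this for each $n$ to $x=y_{n}$, and noting that $\Vert y_{n}-x_{0}\Vert <\delta $ rules out the first alternative, I obtain $v_{n}\in \mathcal{D}^{\prime }(n)$ with $\Vert T_{v_{n}}y_{n}-y_{n}\Vert \geq \varepsilon _{0}$.

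It then remains to observe that the sequence $(v_{n})$ so obtained belongs to $\mathcal{D}$: indeed $v_{n}\in \mathcal{D}^{\prime }(n)$ gives $\sup_{x,y\in C}(\Vert T_{v_{n}}x-T_{v_{n}}y\Vert -\Vert x-y\Vert )\leq \frac{1}{n+1}$, hence the $\limsup $ over $n$ is $\leq 0$. Therefore $\widehat{T}_{(v_{n})}$ is well defined by (\ref{def2}) and, by the hypothesis (\ref{contr1}), fixes $\widetilde{y}$. On the other hand $\widehat{T}_{(v_{n})}\widetilde{y}-\widetilde{y}=(T_{v_{n}}y_{n}-y_{n})_{p}$, so $\Vert \widehat{T}_{(v_{n})}\widetilde{y}-\widetilde{y}\Vert =\lim_{n\rightarrow p}\Vert T_{v_{n}}y_{n}-y_{n}\Vert \geq \varepsilon _{0}>0$, contradicting (\ref{contr1}). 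This contradiction establishes the lemma. The only step that needs a moment's care — and it is not really an obstacle — is the verification that $(v_{n})\in \mathcal{D}$, which is precisely why the bound $\frac{1}{n+1}$ was built into the definition of $\mathcal{D}^{\prime }(n)$; everything else is a direct transcription of the argument for Lemma \ref{Lem1}, with $\mathcal{N}$ replaced by $\mathcal{D}$ and the powers $T^{\beta _{n}}$ replaced by the finite words $T_{v_{n}}$.
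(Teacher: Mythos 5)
Your proof is correct and follows essentially the same contradiction argument as the paper: choose a representative $(y_n)$ with $\Vert y_n-x_0\Vert<\delta$, extract $v_n\in\mathcal{D}'(n)$ with $\Vert T_{v_n}y_n-y_n\Vert\geq\varepsilon_0$, note that the bound $\tfrac{1}{n+1}$ forces $(v_n)\in\mathcal{D}$, and contradict (\ref{contr1}). Your observation that the paper's inductive phrasing is vestigial here (unlike in Lemma \ref{Lem1}, where the $\beta_n$ must be kept increasing) is accurate, but it is a presentational simplification rather than a different argument.
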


\begin{proof}
Since $\left\Vert \widetilde{y}-\dot{x}_{0}\right\Vert <\delta ,$ there
exists a sequence $(y_{n})$ in $C$ such that $\left\Vert
y_{n}-x_{0}\right\Vert <\delta $ for all $n\in \mathbb{N}$ and $\widetilde{y}%
=$ $(y_{n})_{p}.$ Assume, conversely to our claim, that there exists $%
\varepsilon _{0}>0$ such that for every $x\in C$ and $n\in \mathbb{N}$ there
exists $u\in \mathcal{D}^{\prime }(n)$ such that $\left\Vert
x-x_{0}\right\Vert \geq \delta $ or $\Vert T_{u}x-x\Vert \geq \varepsilon
_{0}.$ We shall define a sequence $(u_{n})\in \mathcal{D}$ by induction. For
$n=0$ and $y_{0}\in C$, there exists $u_{0}\in \mathcal{D}^{\prime }(0)$
such that $\Vert T_{u_{0}}y_{0}-y_{0}\Vert \geq \varepsilon _{0}$. Suppose
that we have chosen\ $u_{0}\in \mathcal{D}^{\prime }(0),u_{1}\in \mathcal{D}%
^{\prime }(1),...,u_{n}\in \mathcal{D}^{\prime }(n)$ such that $\Vert
T_{u_{i}}y_{i}-y_{i}\Vert \geq \varepsilon _{0}$ for $i=0,1,...,n.$ By
assumption, since $\left\Vert y_{n+1}-x_{0}\right\Vert <\delta $, there
exists $u_{n+1}\in \mathcal{D}^{\prime }(n+1)$ such that $\Vert
T_{u_{n+1}}y_{n+1}-y_{n+1}\Vert \geq \varepsilon _{0}$. Thus we obtain a
sequence $(u_{n})\in \mathcal{A}^{\mathbb{N}}$ such that $\Vert
T_{u_{n}}y_{n}-y_{n}\Vert \geq \varepsilon _{0}$ and
\begin{equation*}
\sup_{x,y\in C}(\Vert T_{u_{n}}x-T_{u_{n}}y\Vert -\Vert x-y\Vert )\leq \frac{%
1}{n+1}
\end{equation*}%
for all $n\in \mathbb{N}.$ Hence%
\begin{equation*}
\limsup_{n\rightarrow \infty }\sup_{x,y\in C}(\Vert
T_{u_{n}}x-T_{u_{n}}y\Vert -\Vert x-y\Vert )=0,
\end{equation*}%
i. e., $(u_{n})\in \mathcal{D}.$ But this contradicts (\ref{contr1}), since $%
\left\Vert \widehat{T}_{(u_{n})}\widetilde{y}-\widetilde{y}\right\Vert \geq
\varepsilon _{0}.$
\end{proof}

We will also make use of the following simple observation.

\begin{lemma}
\label{Lem3}For every $(u_{n})\in \mathcal{D}$ and $\tilde{x},\tilde{y}\in
\widetilde{C},$
\begin{equation*}
\limsup_{(v_{n})\in \mathcal{D}}\Vert \widehat{T}_{(v_{n})}\tilde{x}-%
\widehat{T}_{(u_{n})}\tilde{y}\Vert =\limsup_{(v_{n})\in \mathcal{D}}\Vert
\widehat{T}_{(v_{n})}\widehat{T}_{(u_{n})}\tilde{x}-\widehat{T}_{(u_{n})}%
\tilde{y}\Vert .
\end{equation*}
\end{lemma}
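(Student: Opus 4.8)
The plan is to unwind the definition of $\limsup$ over the directed set $(\mathcal{D},\preceq)$ and use the nonexpansiveness of the extended maps $\widehat{T}_{(v_n)}$ together with the key algebraic fact that the family $\{\widehat{T}_{(v_n)}:(v_n)\in\mathcal{D}\}$ is commuting. Concretely, recall that $\limsup_{(v_n)\in\mathcal{D}}a_{(v_n)}=\inf_{(w_n)}\sup_{(v_n)\succeq(w_n)}a_{(v_n)}$, so it suffices to produce, for each $(w_n)\in\mathcal{D}$, a cofinal comparison between the two nets.

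First I would fix $(u_n)\in\mathcal{D}$ and $\tilde x,\tilde y\in\widetilde C$, and observe that for any $(v_n)\in\mathcal{D}$ the composition $\widehat{T}_{(v_n)}\circ\widehat{T}_{(u_n)}$ equals $\widehat{T}_{(w_n)}$ where $w_n$ is the concatenation of $v_n$ and $u_n$ (with the identification convention for repeated indices from \eqref{w_n}); in particular $(w_n)\in\mathcal{D}$ and $(v_n)\preceq(w_n)$, $(u_n)\preceq(w_n)$, exactly as established in the paragraph preceding Lemma~\ref{Lem2}. Hence $\widehat{T}_{(v_n)}\widehat{T}_{(u_n)}\tilde x=\widehat{T}_{(w_n)}\tilde x$ with $(w_n)\succeq(v_n)$. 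This lets me bound the right-hand net by the left-hand net along a cofinal set: for every $(v_n)$ there is $(w_n)\succeq(v_n)$ with $\Vert\widehat{T}_{(v_n)}\widehat{T}_{(u_n)}\tilde x-\widehat{T}_{(u_n)}\tilde y\Vert=\Vert\widehat{T}_{(w_n)}\tilde x-\widehat{T}_{(u_n)}\tilde y\Vert$, giving $\limsup_{(v_n)}\Vert\widehat{T}_{(v_n)}\widehat{T}_{(u_n)}\tilde x-\widehat{T}_{(u_n)}\tilde y\Vert\le\limsup_{(v_n)}\Vert\widehat{T}_{(v_n)}\tilde x-\widehat{T}_{(u_n)}\tilde y\Vert$.

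For the reverse inequality I would use nonexpansiveness of $\widehat{T}_{(u_n)}$ in the form of a "telescoping" or commuting trick: since the family commutes, $\widehat{T}_{(v_n)}\tilde x$ and $\widehat{T}_{(v_n)}\widehat{T}_{(u_n)}\tilde x$ differ by applying $\widehat{T}_{(u_n)}$, and more usefully $\widehat{T}_{(v_n+u_n)}\tilde x=\widehat{T}_{(u_n)}\widehat{T}_{(v_n)}\tilde x$, so writing $v_n'$ for the "$v_n$ part" of a general large index one gets $\Vert\widehat{T}_{(v_n')}\tilde x-\widehat{T}_{(u_n)}\tilde y\Vert=\Vert\widehat{T}_{(u_n)}\widehat{T}_{(v_n')}\tilde x-\widehat{T}_{(u_n)}\tilde y\Vert$ up to the asymptotic-nonexpansiveness error, which is $\le\Vert\widehat{T}_{(v_n')}\widehat{T}_{(u_n)}\tilde x-\widehat{T}_{(u_n)}\tilde y\Vert$ — wait, the cleanest route is: every $(v_n)\succeq(u_n)$ factors as $(v_n)=(w_n)$ for the concatenation of some $(v_n')$ with $(u_n)$, i.e. $\widehat{T}_{(v_n)}\tilde x=\widehat{T}_{(v_n')}\widehat{T}_{(u_n)}\tilde x$, so restricting the left-hand $\limsup$ to the cofinal tail $\{(v_n)\succeq(u_n)\}$ expresses it as a $\limsup$ over $(v_n')$ of $\Vert\widehat{T}_{(v_n')}\widehat{T}_{(u_n)}\tilde x-\widehat{T}_{(u_n)}\tilde y\Vert$, which is bounded above by $\limsup_{(v_n)}\Vert\widehat{T}_{(v_n)}\widehat{T}_{(u_n)}\tilde x-\widehat{T}_{(u_n)}\tilde y\Vert$.

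The only genuine subtlety — and the step I expect to be the main obstacle to write cleanly — is the bookkeeping with the directed set $\mathcal{A}$ and the concatenation operation \eqref{w_n}: one must check that "$v_n$ = concatenation of $v_n'$ and $u_n$" can always be arranged cofinally (it can, since $(v_n)\succeq(u_n)$ means each pair of $u_n$ is dominated inside $v_n$, so one peels off a copy of $u_n$ and calls the remainder $v_n'$), and that the identification convention makes $\widehat{T}_{(v_n)}=\widehat{T}_{(v_n')}\circ\widehat{T}_{(u_n)}$ an honest identity of maps on $\widetilde C$ rather than merely an inequality. Once that is in place, combining the two inequalities above yields the claimed equality, and I would remark that both sides are finite since $C$ is bounded and each $\widehat{T}_{(v_n)}$ is nonexpansive.
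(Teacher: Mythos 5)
Your proof is essentially the paper's: the ``easy'' inequality is exactly the paper's closing remark that $\widehat{T}_{(v_n)}\widehat{T}_{(u_n)}\tilde{x}=\widehat{T}_{(w_n)}\tilde{x}$ with $(v_n)\preceq (w_n)$, and your ``peel off a copy of $u_n$'' step for the other inequality is precisely the content of the paper's displayed identity of tail-suprema $\sup_{(\bar{w}_n)\succeq (w_n)}\Vert \widehat{T}_{(\bar{w}_n)}\tilde{x}-\widehat{T}_{(u_n)}\tilde{y}\Vert=\sup_{(\bar{v}_n)\succeq (v_n)}\Vert \widehat{T}_{(\bar{v}_n)}\widehat{T}_{(u_n)}\tilde{x}-\widehat{T}_{(u_n)}\tilde{y}\Vert$. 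The bookkeeping subtlety you flag is real but is treated just as tersely in the paper itself --- note in particular that the peeled remainder $(v_n')$ need not belong to $\mathcal{D}$ (the defect condition defining $\mathcal{D}$ is subadditive under concatenation but is not inherited by factors), so the re-indexed $\limsup$ has to be interpreted with some care in your argument and in the published one alike.
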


\begin{proof}
Fix $(u_{n})\in \mathcal{D},$ $\tilde{x},\tilde{y}\in \widetilde{C},$ and
notice that for every $(v_{n})\in \mathcal{D},$%
\begin{equation*}
\sup_{(\bar{w}_{n})\succeq (w_{n})}\Vert \widehat{T}_{(\bar{w}_{n})}\tilde{x}%
-\widehat{T}_{(u_{n})}\tilde{y}\Vert =\sup_{(\bar{v}_{n})\succeq
(v_{n})}\Vert \widehat{T}_{(\bar{v}_{n})}\widehat{T}_{(u_{n})}\tilde{x}-%
\widehat{T}_{(u_{n})}\tilde{y}\Vert ,
\end{equation*}%
where $(w_{n})$ is defined by (\ref{w_n}). Hence%
\begin{equation*}
\limsup_{(v_{n})\in \mathcal{D}}\Vert \widehat{T}_{(v_{n})}\tilde{x}-%
\widehat{T}_{(u_{n})}\tilde{y}\Vert \leq \limsup_{(v_{n})\in \mathcal{D}%
}\Vert \widehat{T}_{(v_{n})}\widehat{T}_{(u_{n})}\tilde{x}-\widehat{T}%
_{(u_{n})}\tilde{y}\Vert .
\end{equation*}%
The reverse inequality is obvious since $\widehat{T}_{(v_{n})}\widehat{T}%
_{(u_{n})}\tilde{x}=\widehat{T}_{(w_{n})}\tilde{x}$ and $(v_{n})\preceq
(w_{n}).$
\end{proof}

We are now in a\ position to prove the following generalization of Theorem %
\ref{Th1}.

\begin{theorem}
\label{Th21}Suppose $C$ is a nonempty bounded closed and convex subset of a
Banach space $X$ with SFPP and $\mathcal{T}=\left\{ T_{t}:t\in T\right\} $
is a commuting family of asymptotically nonexpansive mappings in the
intermediate sense acting on $C$. Then there exists $x\in C$ such that $%
T_{t}x=x$ for every $t\in T$ (a common fixed point for $\mathcal{T}$).
\end{theorem}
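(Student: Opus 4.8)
The plan is to mimic the proof of Theorem~\ref{Th1} verbatim, replacing the one-parameter net $(\widehat{T}_{(\alpha_n)})_{(\alpha_n)\in\mathcal{N}}$ by the net $(\widehat{T}_{(v_n)})_{(v_n)\in\mathcal{D}}$ constructed above. Since $X$ has SFPP, it is superreflexive by \cite[Th.~3.2]{DuPa}, so $C$ is weakly compact; as before we may assume $\diam C=1$. Fix a free ultrafilter $p$ on $\mathbb{N}$ and form $\widetilde{C}\subset(X)_p$. Each $\widehat{T}_{(v_n)}:\widetilde{C}\to\widetilde{C}$ is nonexpansive by \eqref{estim}, and the family $\{\widehat{T}_{(v_n)}:(v_n)\in\mathcal{D}\}$ is commuting: for $(v_n),(u_n)\in\mathcal{D}$ one has $\widehat{T}_{(v_n)}\widehat{T}_{(u_n)}=\widehat{T}_{(w_n)}=\widehat{T}_{(u_n)}\widehat{T}_{(v_n)}$ with $(w_n)$ given by \eqref{w_n}, because the individual $T_t$ commute. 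By Bruck's theorem \cite[Th.~1]{Br2} (applicable since $\widetilde{C}$ is weakly compact convex with, by superreflexivity, the hereditary FPP and hence conditional FPP in the sense Bruck requires) there is $\widetilde{y}_0\in\widetilde{C}$ fixed by every $\widehat{T}_{(v_n)}$.

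Next I would run the same inductive construction as in Theorem~\ref{Th1}. Fix $\varepsilon<1$ and $x_0\in C$. Using Lemma~\ref{Lem2} in place of Lemma~\ref{Lem1}, I get $x_1\in C$ and $m_1\in\mathbb{N}$ with $\Vert T_u x_1-x_1\Vert<\varepsilon$ for all $u\in\mathcal{D}'(m_1)$ and $\Vert x_1-x_0\Vert\le\diam C<3$. Inductively, having produced $x_j$ and $m_j$, set
\begin{equation*}
D_j=\left\{\widetilde{y}\in\widetilde{C}:\limsup_{(v_n)\in\mathcal{D}}\Vert\widehat{T}_{(v_n)}\dot{x}_j-\widetilde{y}\Vert\le\varepsilon^j\right\},
\end{equation*}
which is nonempty (it contains $\dot{x}_j$), closed and convex. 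The key point, that $\widehat{T}_{(u_n)}(D_j)\subset D_j$ for every $(u_n)\in\mathcal{D}$, is exactly what Lemma~\ref{Lem3} is designed to give: for $\widetilde{y}\in D_j$,
\begin{equation*}
\limsup_{(v_n)\in\mathcal{D}}\Vert\widehat{T}_{(v_n)}\dot{x}_j-\widehat{T}_{(u_n)}\widetilde{y}\Vert=\limsup_{(v_n)\in\mathcal{D}}\Vert\widehat{T}_{(v_n)}\widehat{T}_{(u_n)}\dot{x}_j-\widehat{T}_{(u_n)}\widetilde{y}\Vert\le\limsup_{(v_n)\in\mathcal{D}}\Vert\widehat{T}_{(v_n)}\dot{x}_j-\widetilde{y}\Vert\le\varepsilon^j,
\end{equation*}
the middle inequality because $\widehat{T}_{(u_n)}$ is nonexpansive and fixes nothing is needed beyond that. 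So by Bruck again there is $\widetilde{y}_j\in D_j$ fixed by all $\widehat{T}_{(v_n)}$, and $\Vert\widetilde{y}_j-\dot{x}_j\Vert\le 2\varepsilon^j<3\varepsilon^j$. Apply Lemma~\ref{Lem2} to $\widetilde{y}_j$ and $\delta=3\varepsilon^j$ to obtain $x_{j+1}\in C$ and $m_{j+1}\in\mathbb{N}$ with $\Vert x_{j+1}-x_j\Vert<3\varepsilon^j$ and $\Vert T_u x_{j+1}-x_{j+1}\Vert<\varepsilon^{j+1}$ for all $u\in\mathcal{D}'(m_{j+1})$.

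Then $(x_j)$ is Cauchy, converging to some $x\in C$. To finish, fix $t\in T$ and $j\ge1$. Since $(\{(t,k)\})_{k}\in\mathcal{D}$, the tail estimate $s((T_{\{(t,k)\}})_k)\le0$ shows $\{(t,k)\}\in\mathcal{D}'(m_{j+1})$ for all large $k$, so $\Vert T_t^k x_{j+1}-x_{j+1}\Vert<\varepsilon^{j+1}$ for $k\ge$ some $k_j$. Hence, exactly as in Theorem~\ref{Th1},
\begin{equation*}
\Vert T_t^k x-x\Vert\le(\Vert T_t^k x-T_t^k x_{j+1}\Vert-\Vert x_{j+1}-x\Vert)+\varepsilon^{j+1}+2\Vert x_{j+1}-x\Vert
\end{equation*}
for $k\ge k_j$, and using $\limsup_{k}\sup_{u,w\in C}(\Vert T_t^k u-T_t^k w\Vert-\Vert u-w\Vert)\le0$ together with $j\to\infty$ we get $\lim_{k\to\infty}\Vert T_t^k x-x\Vert=0$; continuity of $T_t$ then yields $T_t x=x$. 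As $t$ was arbitrary, $x$ is a common fixed point. The main obstacle I anticipate is purely bookkeeping: verifying that $\mathcal{D}'(n)$ interacts correctly with the directedness of $\mathcal{D}$ so that Lemma~\ref{Lem2} can be invoked at stage $j$ with the right $\delta$, and checking that the $\limsup$ over the net $\mathcal{D}$ behaves well under the substitution $(v_n)\mapsto(w_n)$ — both of which are handled by the preparatory Lemmas~\ref{Lem2} and~\ref{Lem3} and the computation of $s((T_{w_n}))$ preceding them, so no genuinely new difficulty arises beyond what the one-mapping case already required.
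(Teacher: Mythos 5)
Your proposal is correct and follows essentially the same route as the paper's own proof: the same ultrapower construction over the directed set $\mathcal{D}$, Bruck's theorem applied to the commuting nonexpansive family $\{\widehat{T}_{(v_n)}\}$, the same inductive use of Lemmas \ref{Lem2} and \ref{Lem3} to build the Cauchy sequence $(x_j)$, and the same tail argument via $\{(t,n)\}\in\mathcal{D}'(n_j)$ for a fixed $t$. The only step you gloss over --- that $\dot{x}_j\in D_j$, which needs the induction hypothesis combined with the observation that for each $(v_n)\in\mathcal{D}$ one has $v_n\in\mathcal{D}'(n_j)$ for all sufficiently large $n$ --- is precisely the bookkeeping you flag at the end, and the paper dispatches it in one line.
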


\begin{proof}
We partly follow the reasoning given in the proof of Theorem \ref{Th1}.
Assume that $X$ has the super fixed point property for nonexpansive
mappings. Let $\mathcal{T}=\left\{ T_{t}:t\in T\right\} $ be a commuting
family of asymptotically nonexpansive mappings in the intermediate sense
acting on a nonempty bounded closed and convex set $C\subset X.$ We can
assume that $\diam C=1.$ Take a free ultrafilter $p$ on $\mathbb{N},$ $%
(v_{n})\in \mathcal{D}$ and define $\widehat{T}_{(v_{n})}$ by (\ref{def2}).
Notice that for every $(v_{n}),(u_{n})\in \mathcal{D}$ and any $%
(x_{n})_{p}\in $ $\widetilde{C}$,
\begin{equation*}
(\widehat{T}_{(v_{n})}\circ \widehat{T}_{(u_{n})})(x_{n})_{p}=(\widehat{T}%
_{(u_{n})}\circ \widehat{T}_{(v_{n})})(x_{n})_{p}.
\end{equation*}%
It follows from Bruck's theorem that there exists $\widetilde{y}_{0}\in
\widetilde{C}$ such that $\widehat{T}_{(v_{n})}\widetilde{y}_{0}=\widetilde{y%
}_{0}$ for all $(v_{n})\in \mathcal{D}.$ Fix $\varepsilon <1$ and $x_{0}\in
C.$ We shall define by induction a sequence $(n_{j})$ of natural numbers and
a sequence $(x_{j})$ of elements in $C$ such that
\begin{equation}
\left\Vert x_{j}-x_{j-1}\right\Vert <3\varepsilon ^{j-1}\text{ and }\Vert
T_{u}x_{j}-x_{j}\Vert <\varepsilon ^{j}\text{ for every }u\in \mathcal{D}%
^{\prime }(n_{j}),j\geq 1.  \label{seq2}
\end{equation}%
By Lemma \ref{Lem2}, there exist $x_{1}\in C$ and $n_{1}\in \mathbb{N}$ such
that $\Vert T_{u}x_{1}-x_{1}\Vert <\varepsilon $ for every $u\in \mathcal{D}%
^{\prime }(n_{1})$ and $\left\Vert x_{1}-x_{0}\right\Vert \leq \diam C<3$.

Suppose that we have chosen\ natural numbers $n_{1},...,n_{j}$ and $%
x_{1},...,x_{j}\in C$ $(j\geq 1)$ such that%
\begin{equation}
\left\Vert x_{i}-x_{i-1}\right\Vert <3\varepsilon ^{i-1}\text{ and }\Vert
T_{u}x_{i}-x_{i}\Vert <\varepsilon ^{i}\text{ for every }u\in \mathcal{D}%
^{\prime }(n_{i}),1\leq i\leq j.  \label{ind}
\end{equation}

Let
\begin{equation*}
D_{j}=\left\{ \widetilde{y}=(y_{n})_{p}\in \widetilde{C}:\limsup_{(v_{n})\in
\mathcal{D}}\Vert \widehat{T}_{(v_{n})}\dot{x}_{j}-\widetilde{y}\Vert \leq
\varepsilon ^{j}\right\} ,
\end{equation*}%
where $\dot{x}_{j}=(x_{j},x_{j},...)_{p}$ and
\begin{equation*}
\limsup_{(v_{n})\in \mathcal{D}}\Vert \widehat{T}_{(v_{n})}\dot{x}_{j}-%
\widetilde{y}\Vert =\inf_{(v_{n})\in \mathcal{D}}\sup_{(u_{n})\succeq
(v_{n})}\lim_{n\rightarrow p}\left\Vert T_{u_{n}}x_{j}-y_{n}\right\Vert .
\end{equation*}%
Notice that for every $(v_{n})\in \mathcal{D}$ and $\eta >0,$ there exists $%
k\in \mathbb{N}$ such that $\sup_{x,y\in C}(\Vert T_{v_{n}}x-T_{v_{n}}y\Vert
-\Vert x-y\Vert )<\eta $ for every $n>k.$ Hence $v_{n}\in \mathcal{D}%
^{\prime }(n_{j})$ for sufficiently large $n$ and applying the induction
assumption (\ref{ind}) gives $\lim_{n\rightarrow p}\Vert
T_{v_{n}}x_{j}-x_{j}\Vert \leq \varepsilon ^{j}$ for every $(v_{n})\in
\mathcal{D}.$ It follows that $\dot{x}_{j}\in D_{j}$ and $D_{j}$ is a
nonempty closed and convex subset of $\widetilde{C}.$ By Lemma \ref{Lem3},
for fixed $(u_{n})\in \mathcal{D}$ and $\widetilde{y}\in D_{j},$%
\begin{align*}
& \limsup_{(v_{n})\in \mathcal{D}}\Vert \widehat{T}_{(v_{n})}\dot{x}_{j}-%
\widehat{T}_{(u_{n})}\widetilde{y}\Vert =\limsup_{(v_{n})\in \mathcal{D}%
}\Vert \widehat{T}_{(v_{n})}\widehat{T}_{(u_{n})}\dot{x}_{j}-\widehat{T}%
_{(u_{n})}\widetilde{y}\Vert \\
& \leq \limsup_{(v_{n})\in \mathcal{D}}\Vert \widehat{T}_{(v_{n})}\dot{x}%
_{j}-\widetilde{y}\Vert \leq \varepsilon ^{j},
\end{align*}%
and hence $\widehat{T}_{(u_{n})}(D_{j})\subset D_{j}$ for every $(u_{n})\in
\mathcal{D}.$ By Bruck's theorem, there exists $\widetilde{y}_{j}\in D_{j}$
such that $\widehat{T}_{(v_{n})}\widetilde{y}_{j}=\widetilde{y}_{j}$ for all
$(v_{n})\in \mathcal{D}.$ It is easy to see that $\left\Vert \widetilde{y}%
_{j}-\dot{x}_{j}\right\Vert <3\varepsilon ^{j}$ and, by Lemma \ref{Lem2},
there exist $x_{j+1}\in C$ and $n_{j+1}\in \mathbb{N}$ such that $\left\Vert
x_{j+1}-x_{j}\right\Vert <3\varepsilon ^{j}$ and $\Vert
T_{u}x_{j+1}-x_{j+1}\Vert <\varepsilon ^{j+1}$ for every $u\in \mathcal{D}%
^{\prime }(n_{j+1}).$ Thus we obtain by induction a sequence $(n_{j})$ of
natural numbers and a sequence $(x_{j})$ of elements in $C$ such that (\ref%
{seq2}) is satisfied. It follows that $(x_{j})$ is a Cauchy sequence
converging to some $x\in C$.

Fix $T_{t}\in \mathcal{T}$ and notice that for every $n_{j},$ there exists $%
k_{j}$ such that $\{(t,n)\}\in \mathcal{D}^{\prime }(n_{j})$ for $n>k_{j},$
since $T_{t}$ is asymptotically nonexpansive in the intermediate sense.
Applying (\ref{seq2}) gives%
\begin{equation*}
\limsup_{n\rightarrow \infty }\Vert T_{t}^{n}x_{j}-x_{j}\Vert \leq
\varepsilon ^{j},\ j\geq 1.
\end{equation*}%
Furthermore,%
\begin{align*}
\Vert T_{t}^{n}x-x\Vert & \leq (\Vert T_{t}^{n}x-T_{t}^{n}x_{j}\Vert -\Vert
x-x_{j}\Vert )+\Vert T_{t}^{n}x_{j}-x_{j}\Vert +2\Vert x_{j}-x\Vert \\
& \leq \sup_{x,y\in C}(\Vert T_{t}^{n}x-T_{t}^{n}y\Vert -\Vert x-y\Vert
)+\Vert T_{t}^{n}x_{j}-x_{j}\Vert +2\Vert x_{j}-x\Vert .
\end{align*}%
for every $j,n\geq 1$ and, consequently, $\limsup_{n\rightarrow \infty
}\Vert T_{t}^{n}x-x\Vert =0.$ Since $T_{t}$ is continuous, $T_{t}x=x$.
\end{proof}

\begin{remark}
It was proved in \cite[Th. 4]{DoLo} that if $C$ is a nonempty weakly compact
convex subset of a Banach space $X$ and every asymptotically nonexpansive
mapping of $C$ satisfies the $(\omega )$-fixed point property (which is a
little stronger than the fixed point property), then the set of common fixed
points of any commuting family of asymptotically nonexpansive mappings
acting on $C$ is a nonexpansive retract of $C.$ It is not known whether a
similar conclusion can be drawn under the assumptions of Theorem \ref{Th21}.
\end{remark}

\end{document}